\newtheorem{te}{Theorem}
\newtheorem{ex}{Example}
\begin{document}

\title { \vspace{3.5cm} \textbf{The Double Star Sequences and the General Second Zagreb Index}} 

\author{\textbf{Leonid Bedratyuk}}
\date{}
\maketitle 

\begin{center}
{\it Khmelnytskiy national university, Instytutska, 11,  Khmelnitsky, 29016, Ukraine}
\end{center}

\begin{center}
{\tt leonid.uk@gmal.com}
\end{center}

\begin{center}
\textbf{Abstract}
\end{center}

{\small For a simple graph we introduce  notions of the double star sequence,   the double star frequently sequence and prove that these sequences   are inverses of each other. As a consequence, we express  the general  second Zagreb index in terms of the double star sequence. Also,   we calculate the ordinary generating function and a linear recurrence relation for the sequence of  the general  second Zagreb indexes.}

\renewcommand{\baselinestretch}{1.5}

\section{Introduction}

Let $G$ be a simple graph whose vertex and edge sets are $V (G)$ and $E(G)$, respectively. Let $d_v$ be the degree  of the vertex $v \in  V (G).$  For any real $p$ the general second Zagreb index is defined by  
$$
M_2^{(p)}(G)=\sum_{uv \in E(G)} (d_u \, d_v)^{p},
$$
see \cite{BE} for more details.
Put $n=|V(G)|$ and $m=|E(G)|$. The  double star graph $S_{a,b}, 0 \leq a\leq b $ is a three with the following degree sequence 
$$a+1, b+1, \underbrace{1,\ldots , 1}_{a+b \text{ times} }.$$
For example  $S_{0,0}=P_2,$  $S_{0,1}=P_3$, $S_{1,1}=P_4$ and $S_{0,a}$ is the star  $S_{a+1}.$

Denote by $S_{a,b}(G)$ the number of subgraphs  of $G$  which  are isomorphic to the double star  $S_{a,b}.$ 
For instance $S_{0,0}(G)$ is equal to the number  of edges of  $G.$  
It is easy to see that there exists only one connected graph $G$ of order  $n$ such that   $S_{n-2,n-2}(G)=1$,  namely, the graph with the degree sequence 
$$n-1, n-1, \underbrace{2,\ldots , 2}_{n-2 \text{ times} }.$$
Also, we have that $S_{a,n-1}(G)=0$  for all $a.$ 

The triangle
\begin{align*}
&S_{0,0}(G), S_{0,1}(G), \ldots, S_{0,n-2}(G),\\
& S_{1,2}(G), \ldots, S_{1,n-2}(G),\\
&\ldots \ldots \ldots \ldots\\
&S_{n-2,n-2}(G),
\end{align*}
is called  the  \textit{double star sequence} of a graph $G.$

The value $S_{a,b}(G)$ can be counted   by  simple combinatorial techniques:

$$
S_{a,b}(G)=\begin{cases} \displaystyle \sum_{\{u,v\}  \in E(G)} \left( \binom{d_u-1}{a}\binom{d_v-1}{b}+ \binom{d_u-1}{b}\binom{d_v-1}{a} \right), a \le b,\\
 \displaystyle \sum_{\{u,v\}  \in E(G)}  \binom{d_u-1}{a}\binom{d_v-1}{a}, a = b, \end{cases}
$$
 see   \cite{SG}, the formulas (1.7)-(1.8).

For small $p$ we have 
$$
M_2^{(0)}(G)=S_{0,0}(G)=m,
$$
and 
\begin{gather*}
M_2^{(1)}(G)=M_2(G)=\sum_{uv \in E(G)} d_u d_v= \sum_{uv \in E(G)} 1+(d_u-1)+(d_v-1)+(d_u-1)(d_v-1)
 \\=m+\sum_{uv \in E(G)} \binom{d_u-1}{1}\binom{d_v-1}{0}+\binom{d_u-1}{0}\binom{d_v-1}{1}+\binom{d_u-1}{1}\binom{d_v-1}{1}\\= S_{0,0}(G)+S_{0,1}(G)+S_{1,1}(G).
\end{gather*}

Also,   by using the simple identity
\begin{gather*}
((x+1)(y+1))^2
=1+3 \left(\binom{x}{1}+\binom{y}{1}\right)+2 \left(\binom{x}{2}+\binom{y}{2}\right)  +9\binom{x}{1}\binom{y}{1}\\+6 \left(\binom{x}{1}\binom{y}{2}+\binom{x}{2}\binom{y}{1}\right)
+4\binom{x}{2}\binom{y}{2},
\end{gather*}
we have that 
\begin{gather*}
M_2^{(2)}(G)=\sum_{uv \in E(G)} (d_u d_v)^2=S_{{0,0}}(G)+3\,S_{{0,1}}(G)+2\,S_{{0,2}}(G)+9\,S_{{1,1}}(G)\\+6\,S_{{1,2}}(G)+4\,S_{{2
,2}}(G)
.
\end{gather*}

We  generalize these  expressions for the general second  Zagreb index $M_2^{(p)}(G)$  for any natural  $p.$ See also \cite{BS} for similar results for the general first  Zagreb index. 
The main result of the  paper is the formula for expressing  of the general second  Zagreb index in terms of the double star sequence:

\begin{gather*}
M_2^{(p)}(G)=\sum_{i=0}^{n-2} \sum_{k=i}^{n-2} i! k! \left\{ {p+1 \atop i+1}\right\} \left\{ {p+1 \atop k+1}\right\}S_{i,k}(G), p \in \mathbb{N},
\end{gather*}
here  $\displaystyle \left\{ {p \atop i}\right\}$ are the Stirling numbers of the second kind.

Also, we calculate the ordinary generating function for the integer sequence $\{M_2^{(p)}(G) \}$. Denote by $C_{n}$  the set 
$$
C_{n}=\{ i \cdot j \mid 0 \leq  i,j \leq n\}.
$$
Then
$$
\sum_{p=0}^\infty M_2^{(p)}(G) z^p =\frac{\displaystyle \sum_{k=0}^{|C_{n-1}|-1} \left( \sum_{i=\max(0, k-|C_{n-1}|)}^{\min(k,|C_{n-1}|)} \left [ {C_{n-1}} \atop {|C_{n-1}|-i}\right ] M_2^{(k-i)}(G) \right) t^k }{\prod\limits_{c \in C_{n-1}}(1- c \cdot t)},
$$

and the linear  recurrence relation for the integer sequence $\{ M_2^{(p)}(G) \}:$  

$$
M_2^{(|C_{n-1}|)}(G)=-\sum_{i=1}^{|C_{n-1}|-1} \left [ {C_{n-1}} \atop {i}\right ] M_2^{(i)}(G),
$$

here 
 $\displaystyle \left[ {C_{n-1} \atop i}\right]$ are the Comtet numbers of the first kind associated with the set $C_{n-1}.$


\section{Double star and double frequently sequences}


Any edge   $\{ u,v \} \in E(G)$ is the double star  $S_{d_u-1, d_v-1}, d_u \leq d_v.$
Let $f_{i,j}$ denotes the number of edges in $G$  which are the double stars $S_{i,j}$. For example,  $f_{0,0}$ is the number of isolated edges in $G.$  For the graph $G$ with the degree sequence 
$$n-1, n-1, \underbrace{2,\ldots , 2}_{n-2 \text{ times} },$$
we have $f_{n-2,n-2}=1,$ $f_{0,n-2}=2(n-2)$ and $f_{i,j}=0$.
The integer triangle
\begin{align*}
&f_{0,0}, f_{0,1}, \ldots, f_{0,n-2},\\
&f_{1,1}, \ldots, f_{1,n-2},\\
&\ldots \ldots \ldots \ldots\\
&f_{n-2,n-2},
\end{align*}
is called  the  \textit{double star frequently sequence} of a graph $G.$

\begin{ex}{\rm  Let $G=K_4.$   Then the  double star  sequence  and the  double star frequently sequence  have  form 
$$
\begin{array}{ll}
S_{0,0}(G)=6, S_{0,1}(G)=24,  S_{0,2}(G)=12, &f_{0,0}=0, f_{0,1}=0,  f_{0,2}=0,\\
S_{1,1}(G)=24, S_{1,2}(G)=24,&f_{1,1}=0,  f_{1,2}=0,\\
S_{2,2}(G)=6,&f_{2,2}=6.
\end{array}
$$
}
\end{ex}

The double star sequence  and the double star frequency  sequence  are  a pair of multinomial inverse  sequences, see   \cite[section 3.5]{Rio}  for more details.
The following theorem holds.

\begin{te} Let $G$ be a simple graph. Then 

\begin{align*}
(i) & S_{a,b}(G)=\begin{cases}\displaystyle \sum_{i=0}^{n-2} \sum_{j=i}^{n-2} \left( \binom{i}{a}\binom{j}{b}+ \binom{i}{b}\binom{j}{a} \right) f_{i,j}, a<b,\\ \displaystyle \sum_{i=0}^{n-2} \sum_{j=i}^{n-2}  \binom{i}{a}\binom{j}{a} f_{i,j}, a=b,  \end{cases}\\
(ii) & f_{a,b}=\begin{cases}\displaystyle \sum_{i=0}^{n-2} \sum_{j=i}^{n-2} (-1)^{a+b+i+j} \left( \binom{i}{a}\binom{j}{b}+ \binom{i}{b}\binom{j}{a} \right) S_{i,j}(G),a<b,\\ \displaystyle \sum_{i=0}^{n-2}\sum_{j=i}^{n-2} (-1)^{i+j} \binom{i}{a} \binom{j}{a}S_{i,j}(G), a=b. \end{cases}
\end{align*}

\end{te}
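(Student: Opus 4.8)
The plan is to prove part (i) directly by substituting the definition of $f_{i,j}$ into the claimed formula and matching it against the combinatorial count of $S_{a,b}(G)$ given earlier in the excerpt. Recall that $f_{i,j}$ counts the edges $\{u,v\}$ with $\{d_u-1,d_v-1\}=\{i,j\}$ (taking $i\le j$). So for a fixed edge $\{u,v\}$ with $d_u-1=i$ and $d_v-1=j$, the contribution of that edge to the earlier formula $S_{a,b}(G)=\sum_{\{u,v\}}\bigl(\binom{d_u-1}{a}\binom{d_v-1}{b}+\binom{d_u-1}{b}\binom{d_v-1}{a}\bigr)$ is exactly $\binom{i}{a}\binom{j}{b}+\binom{i}{b}\binom{j}{a}$. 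Grouping the edges by their degree-pair value $(i,j)$ and using that there are precisely $f_{i,j}$ of them, the edge-sum collapses into the double sum $\sum_{i\le j}\bigl(\binom{i}{a}\binom{j}{b}+\binom{i}{b}\binom{j}{a}\bigr)f_{i,j}$. The $a=b$ case is the same computation after noting the symmetric term doubles the single product. This establishes (i) essentially by rewriting a sum over edges as a sum over degree-pair types.

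Part (ii) is the genuine content: it asserts that the linear transform in (i) is inverted by the signed transform. I would set this up as a statement about \emph{Möbius-type inversion for binomial coefficients}, reducing the two-index problem to the classical one-index identity
\begin{equation*}
\sum_{k}(-1)^{a+k}\binom{k}{a}\binom{j}{k}=\delta_{a,j},
\end{equation*}
which is the standard orthogonality relation for binomial coefficients (equivalently, the fact that the lower-triangular matrix $\bigl(\binom{j}{k}\bigr)$ has inverse $\bigl((-1)^{j+k}\binom{j}{k}\bigr)$). The cleanest route is to observe that $S_{i,j}(G)$ and $f_{i,j}$ are related by a tensor product of this one-variable binomial transform applied in each index, so the two-variable inverse is just the tensor product of the two one-variable inverses. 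This is exactly the framework of multinomial inverse pairs cited as \cite[section 3.5]{Rio} in the excerpt. Substituting the formula from (i) into the right-hand side of (ii) and interchanging the order of summation, the inner sums over $i$ and $j$ each factor into orthogonality relations that produce Kronecker deltas, collapsing the quadruple sum back to $f_{a,b}$.

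The main obstacle I anticipate is \emph{bookkeeping the symmetrization}, not the underlying inversion. Because the sequences are indexed over the triangle $i\le j$ rather than the full square, the symmetric combination $\binom{i}{a}\binom{j}{b}+\binom{i}{b}\binom{j}{a}$ and the diagonal case $a=b$ must be handled so that the off-diagonal entries are not double-counted and the diagonal is weighted correctly. The tidiest way to manage this is to first extend everything to the full square $0\le i,j\le n-2$ by declaring $f_{i,j}=f_{j,i}$ and working with the symmetric, unhalved kernels $\binom{i}{a}\binom{j}{b}$ there; on the full square the inversion is a clean tensor product of one-dimensional binomial inversions, and one then descends back to the triangle at the end, checking that the diagonal terms match the $a=b$ formula. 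I would carry out the one-variable orthogonality lemma first, then the full-square tensor inversion, and only afterward reconcile the triangular indexing and the symmetrized kernel to recover the stated form of (ii).
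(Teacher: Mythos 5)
Your proposal is correct and follows essentially the same route as the paper: part (i) by grouping edges according to their degree-pair type, and part (ii) by substituting (i) into the right-hand side, interchanging summations, and factoring the coefficient of $f_{p,q}$ into a product of two one-variable binomial orthogonality sums $\sum_i(-1)^i\binom{i}{a}\binom{p}{i}=(-1)^a\delta_{a,p}$. The only cosmetic difference is in how the triangular indexing is reconciled with the square: the paper symmetrizes the kernel via the identity $\bigl(\sum_i a_i\bigr)\bigl(\sum_k b_k\bigr)=\sum_i a_ib_i+\sum_{i<k}(a_ib_k+b_ia_k)$ rather than extending $f_{i,j}$ to a symmetric array, and it too treats only the $a=b$ case in detail, leaving the off-diagonal bookkeeping (where your diagonal factor of $2$ must be checked) to the reader.
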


\begin{proof}  $(i)$ The statement follows immediately from the definitions of $f_{i,j}.$

$(ii)$  For simplicity, we consider only the case $a=b$.  We have

\begin{gather*}
f_{a,a}=\sum_{i=0}^{n-2}\sum_{j=i}^{n-2} (-1)^{i+j} \binom{i}{a} \binom{j}{a}S_{i,j}(G)=\sum_{i=0}^{n-2} (-1)^{i+i} \binom{i}{a} \binom{i}{a}S_{i,i}(G)\\+\sum_{i=0}^{n-2}\sum_{j=i+1}^{n-2} (-1)^{i+j} \binom{i}{a} \binom{j}{a}S_{i,j}(G)=\sum_{i=0}^{n-2}  \binom{i}{a} \binom{i}{a}\sum_{p=0}^{n-2} \sum_{q=p}^{n-2}  \binom{p}{i}\binom{q}{i} f_{p,q}\\+\sum_{i=0}^{n-2}\sum_{j=i+1}^{n-2} (-1)^{i+j} \binom{i}{a} \binom{j}{a}\sum_{p=0}^{n-2} \sum_{q=p}^{n-2} \left( \binom{p}{i}\binom{q}{j}+ \binom{p}{j}\binom{q}{i} \right) f_{p,q}\\=
\sum_{p=0}^{n-2} \sum_{q=p}^{n-2}\sum_{i=0}^{n-2} \left(  \binom{i}{a} \binom{i}{a} \binom{p}{i}\binom{q}{i}+\sum_{j=i+1}^{n-2} (-1)^{i+j}\binom{i}{a} \binom{j}{a} \left( \binom{p}{i}\binom{q}{j}+ \binom{p}{j}\binom{q}{i} \right)\right) f_{p,q}.
\end{gather*}

Now we use the  orthogonal relation, see \cite{Rio}, 
$$
\sum_{i=0}^n (-1)^i \binom{i}{a} \binom{b}{i}=(-1)^a \delta_{a,b},
$$
and the simple  identity
\begin{gather*}
\left ( \sum_{i=0}^p a_i \right) \times \left ( \sum_{k=0}^p b_k \right)=\sum_{i=0}^n a_i b_i+ \sum_{i=0}^p \sum_{k=i+1}^p (a_i b_k+b_i a_k).
\end{gather*}

Then the coefficient of  $f_{p,q}$ equals 
\begin{gather*}
\sum_{i=0}^{n-2}  \binom{i}{a} \binom{i}{a} \binom{p}{i}\binom{q}{i}+\sum_{i=0}^{n-2}\sum_{j=i+1}^{n-2} (-1)^{i+j}\binom{i}{a} \binom{j}{a} \left( \binom{p}{i}\binom{q}{j}+ \binom{p}{j}\binom{q}{i} \right)\\=
\sum_{i=0}^{n-2}\sum_{j=0}^{n-2} (-1)^{i+j}\binom{i}{a} \binom{p}{i}\binom{j}{a} \binom{q}{j}\\=
\sum_{i=0}^{n-2} (-1)^i \binom{i}{a} \binom{p}{i} \times \sum_{j=0}^{n-2} (-1)^j \binom{j}{a} \binom{q}{j}=(-1)^a \delta_{p,a} (-1)^a \delta_{q,a}.
\end{gather*}
Thus 
\begin{gather*}
\sum_{i=0}^{n-2}\sum_{j=i}^{n-2} (-1)^{i+j} \binom{i}{a} \binom{j}{a}S_{i,j}(G)= (-1)^a \delta_{p,a} (-1)^a \delta_{q,a} f_{p,q}=f_{a,a},
\end{gather*}
as required.

The proof of the case $a \neq b$ is almost identical to that of the case $a=b$ and is omitted.
\end{proof}

As a consequence, we  obtain  the  double star variant of the Handshaking lemma

$$
\sum_{i=0}^{n-2} \sum_{j=i}^{n-2} f_{i,j}=\sum_{i=0}^{n-2} \sum_{j=i}^{n-2} \binom{i}{0} \binom{j}{0} f_{i,j}=S_{0,0}(G)=m.
$$


From \cite{DM} we know that

$$
\sum_{uv \in E(G)}\left( \frac{1}{d_u}+\frac{1}{d_v} \right)=n-n_0,
$$ 
here $n_0$ is the number of isolated vertices of $G.$ Then by definition of  the double star frequently sequence  we get  that
\begin{gather*}
\sum_{i=0}^{n-2}\sum_{j=i}^{n-2} \left(\frac{1}{i+1}+\frac{1}{j+1} \right)  f_{i,j}=n-n_0.
\end{gather*}


\section{The second general Zagreb index}


For the second general Zagreb index we have 
$$
 M_2^{(p)}(G)=\sum_{i=0}^{n-2}\sum_{j=i}^{n-2} ((i+1) (j+1))^p f_{i,j}.
$$

Now we can express the second general Zagreb index $M_2^p(G)$  in terms of double star sequence:
 
\begin{te} 
\begin{gather*}
M_2^p(G)=\sum_{i=0}^{n-2} \sum_{k=i}^{n-2} i! k! \left\{ {p+1 \atop i+1}\right\} \left\{ {p+1 \atop k+1}\right\}S_{i,k}(G),
\end{gather*}
for any natural number $p.$
\end{te}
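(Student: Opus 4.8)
The plan is to combine the edge-wise expression $M_2^{(p)}(G)=\sum_{i=0}^{n-2}\sum_{j=i}^{n-2}((i+1)(j+1))^p f_{i,j}$ stated at the start of this section with the inversion formula of Theorem 1$(i)$, the bridge between them being a single-variable identity that rewrites a $p$-th power as a binomial sum with Stirling coefficients. Concretely, I would first establish
$$(m+1)^p=\sum_{a=0}^{p} a!\left\{{p+1\atop a+1}\right\}\binom{m}{a}.$$
To prove this, start from the classical expansion $x^p=\sum_k\left\{{p\atop k}\right\}k!\binom{x}{k}$ with $x=m+1$, apply Pascal's rule $\binom{m+1}{k}=\binom{m}{k}+\binom{m}{k-1}$, and collect the coefficient of $\binom{m}{a}$, which becomes $a!\left\{{p\atop a}\right\}+(a+1)!\left\{{p\atop a+1}\right\}=a!\bigl(\left\{{p\atop a}\right\}+(a+1)\left\{{p\atop a+1}\right\}\bigr)$; the Stirling recurrence $\left\{{p+1\atop a+1}\right\}=(a+1)\left\{{p\atop a+1}\right\}+\left\{{p\atop a}\right\}$ collapses this to $a!\left\{{p+1\atop a+1}\right\}$, as desired.

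Second, I would substitute this identity for both factors $(i+1)^p$ and $(j+1)^p$ in the $f$-expression and interchange the order of summation, abbreviating $c_{a,b}:=a!\,b!\left\{{p+1\atop a+1}\right\}\left\{{p+1\atop b+1}\right\}$ (note the symmetry $c_{a,b}=c_{b,a}$). This yields
$$M_2^{(p)}(G)=\sum_{i=0}^{n-2}\sum_{j=i}^{n-2}\Bigl(\sum_{a,b}c_{a,b}\binom{i}{a}\binom{j}{b}\Bigr)f_{i,j}.$$
Since $\binom{i}{a}=0$ whenever $a>i$ and $f_{i,j}=0$ outside the triangle $0\le i\le j\le n-2$, every surviving index obeys $a,b\le n-2$, so the outer index set already matches the statement (with $a,b$ playing the roles of $i,k$).

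Third — and this is where the bookkeeping must be carried out carefully — I would regroup the inner double sum according to the unordered structure of $(a,b)$: separating $a<b$, $a=b$, $a>b$ and relabelling $a\leftrightarrow b$ in the last block, the symmetry $c_{a,b}=c_{b,a}$ turns $\sum_{a,b}c_{a,b}\binom{i}{a}\binom{j}{b}$ into $\sum_{a<b}c_{a,b}\bigl(\binom{i}{a}\binom{j}{b}+\binom{i}{b}\binom{j}{a}\bigr)+\sum_{a}c_{a,a}\binom{i}{a}\binom{j}{a}$. Moving the $(a,b)$ sums back outside, the inner sums over $i\le j$ are exactly the two cases of Theorem 1$(i)$, hence equal $S_{a,b}(G)$ for $a<b$ and $S_{a,a}(G)$ for $a=b$. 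Reassembling gives $M_2^{(p)}(G)=\sum_{a\le b}c_{a,b}S_{a,b}(G)$, which is the claimed formula.

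I expect the main obstacle to be precisely this last symmetric regrouping, not the algebra: Theorem 1$(i)$ is written in the asymmetric convention $0\le i\le j$ together with a symmetrized bracket for $a<b$, so the naive substitution produces a freely doubly-indexed sum that does not literally match until one exploits the symmetry of $c_{a,b}$ to fold the $a>b$ contributions onto the $a<b$ ones. The single-variable identity and the substitution are routine once the Stirling recurrence is invoked; all the care lies in reconciling the two different summation conventions.
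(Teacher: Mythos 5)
Your proposal is correct and is essentially the paper's own argument run in the opposite direction: the paper starts from the Stirling-weighted sum over $S_{i,k}(G)$, substitutes Theorem~1$(i)$, and shows the coefficient of each $f_{s,t}$ factors as $\bigl(\sum_i\binom{s}{i}i!\left\{{p+1\atop i+1}\right\}\bigr)\bigl(\sum_k\binom{t}{k}k!\left\{{p+1\atop k+1}\right\}\bigr)=((s+1)(t+1))^p$, using exactly your key identity and the same diagonal-plus-symmetrized-off-diagonal regrouping. The only substantive deviation is your derivation of $(m+1)^p=\sum_a a!\left\{{p+1\atop a+1}\right\}\binom{m}{a}$ via Pascal's rule and the Stirling recurrence, where the paper instead invokes $\left\{{p+1\atop i+1}\right\}=\sum_{j\ge i}\binom{p}{j}\left\{{j\atop i}\right\}$; both are valid.
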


\begin{proof}
We have 

\begin{gather*}
\sum_{i=0}^{n-2} \sum_{k=i}^{n-2} i! k! \left\{ {p+1 \atop i+1}\right\} \left\{ {p+1 \atop k+1}\right\}S_{i,k}(G)=\sum_{i=0}^{n-2} \sum_{k=i+1}^{n-2} i! k! \left\{ {p+1 \atop i+1}\right\} \left\{ {p+1 \atop k+1}\right\}S_{i,k}(G)\\+\sum_{i=0}^{n-2}  \left(i!  \left\{ {p+1 \atop i+1}\right\}\right)^2 S_{i,i}(G).
\end{gather*}
Simplify the second sum 
\begin{gather*}
\sum_{i=0}^{n-2}  \left(i!  \left\{ {p+1 \atop i+1}\right\}\right)^2 S_{i,i}(G)=\sum_{i=0}^{n-2}  \left(i!  \left\{ {p+1 \atop i+1}\right\}\right)^2 \sum_{s=i}^{n-2} \sum_{t=s}^{n-2}  \binom{s}{i}\binom{t}{i} f_{s,t}\\=
\sum_{s=0}^{n-2} \sum_{t=s}^{n-2} \left( \sum_{i=0}^{p}\left(i!  \left\{ {p+1 \atop i+1}\right\}\right)^2  \binom{s}{i}\binom{t}{i}\right) f_{s,t}. 
\end{gather*}

Simplify the first sum
\begin{gather*}
\sum_{i=0}^{n-2} \sum_{k=i+1}^{n-2} i! k! \left\{ {p+1 \atop i+1}\right\} \left\{ {p+1 \atop k+1}\right\}S_{i,k}(G)\\=\sum_{i=0}^{n-2} \sum_{k=i+1}^{n-2} i! k! \left\{ {p+1 \atop i}\right\} \left\{ {p+1 \atop k+1}\right\}\sum_{s=i}^{n-2} \sum_{t=s}^{n-2} \left( \binom{s}{i}\binom{t}{k}+ \binom{s}{k}\binom{t}{i} \right) f_{s,t}\\=
\sum_{s=0}^{n-2} \sum_{t=s}^{n-2}\left(  \sum_{i=0}^{p} \sum_{k=i+1}^{p} i! k! \left\{ {p+1 \atop i+1}\right\} \left\{ {p+1 \atop k+1}\right\}\left( \binom{s}{i}\binom{t}{k}+ \binom{s}{k}\binom{t}{i} \right)  \right) f_{s,t}.
\end{gather*}

Thus the coefficient of  $f_{s,t}$ in 
$$
\sum_{i=0}^{p} \sum_{k=i}^{p} i! k! \left\{ {p+1 \atop i+1}\right\} \left\{ {p+1 \atop k+1}\right\}S_{i,k}(G),
$$
equals
\begin{gather*}
\sum_{i=0}^{p}\left(i!  \left\{ {p+1 \atop i+1}\right\}\right)^2  \binom{s}{i}\binom{t}{i}+ \sum_{i=0}^{p} \sum_{k=i+1}^{p} i! k! \left\{ {p+1 \atop i+1}\right\} \left\{ {p+1 \atop k+1}\right\}\left( \binom{s}{i}\binom{t}{k}+ \binom{s}{k}\binom{t}{i} \right) \\=\sum_{i=0}^p\binom{s}{i} i! \left\{ {p+1 \atop i+1}\right\}  \times \sum_{k=0}^p\binom{t}{k} k! \left\{ {p+1 \atop k+1}\right\}=((s+1)(t+1))^p.
\end{gather*}
We are using here the identity
$$
\sum_{i=0}^p\binom{s}{i} i! \left\{ {p+1 \atop i+1}\right\} =(s+1)^p,
$$
which  can be derived from the formula 
\begin{gather*}
\left\{ {p+1 \atop i+1}\right\}=\sum_{j=i}^p \binom{p}{j}\left\{ {j \atop i}\right\},
\end{gather*}
see \cite{GKP}.
In fact, we have
\begin{gather*}
\sum_{i=0}^p\binom{s}{i} i! \left\{ {p+1 \atop i+1}\right\}=\sum_{i=0}^p\binom{s}{i} i! \sum_{j=i}^p \binom{p}{j}\left\{ {j \atop i}\right\}\\=\sum_{j=0}^p \binom{p}{j}\sum_{i=0}^j\binom{s}{i} i! \left\{ {j \atop i}\right\}=\sum_{j=0}^p \binom{p}{j} s^j=(s+1)^p.
\end{gather*}
Here, we used the following simple  summation interchange formula
$$
\sum_{i=0}^n a_i \sum_{j=i}^n b_{j} =\sum_{j=0}^n b_i \sum_{i=0}^j a_i.
$$

Thus  
\begin{gather*}
\sum_{i=0}^{n-2} \sum_{k=i}^{n-2} i! k! \left\{ {p+1 \atop i+1}\right\} \left\{ {p+1 \atop k+1}\right\}S_{i,k}(G)=\sum_{s=0}^{p} \sum_{t=s}^{p} ((s+1)\,(t+1))^p f_{s,t}=M_2^{(p)}(G),
\end{gather*}
as required.

\end{proof}

\begin{ex}{\rm For any simple graph $G$  we have
\begin{align*}
&M_2^{(0)}(G)=S_{0,0}(G),\\
&M_2^{(1)}(G)=S_{{0,0}}(G)+S_{{0,1}}(G)+S_{{1,1}}(G),\\
&M_2^{(2)}(G)=S_{{0,0}}(G)+3\,S_{{0,1}}(G)+2\,S_{{0,2}}(G)+9\,S_{{1,1}}(G)+6\,S_{{1,2}}(G)+4\,S_{{2
,2}}(G),
\\
&M_2^{(3)}(G)=S_{{0,0}}(G)+7\,S_{{0,1}}(G)+12\,S_{{0,2}}(G)+6\,S_{{0,3}}(G)+49\,S_{{1,1}}(G)+84\,S_
{{1,2}}(G)\\&42\,S_{{1,3}}(G)+144\,S_{{2,2}}(G)+72\,S_{{2,3}}(G)+36\,S_{{3,3}}(G)
.
\end{align*}
}
\end{ex}
Similar formulas  for  triangle free graphs are derived in \cite{SG}.


\section{Recurrence relations for the generalized second  Zagreb indexes}


The following generalization of the Srirling numbers of the first kind is well known, see 
 \cite{COM}, \cite{TM}. Let $S$ be an arbitrary set of natural numbers of cardinality  $|S|.$ Then \textit{the Comtet numbers  of the first kind} $\displaystyle \left [ {S \atop i}\right ]$ associated with  the set $S$ are defined by
$$
(z)_S=\prod_{s \in S}(z-s)=\sum_{i=0}^{|S|} \left [ {S \atop i}\right ] z^i,
$$
here $(z)_S$ is the generalized Pochhamer symbols.

The   Comtet numbers  of the first kind associated with  the set $S=\{ 0,1,\cdots, n-1 \}$  coincides with the usual Stirling numbers  of the first kind.

\begin{ex}{\rm Denote by $C_n$ the set 
$$
C_{n}= \{  i\cdot j \mid 0 \leq i,j \leq n \}.
$$
For  $n=4$ we have 
$$
C_{n}= \{0, 1, 2, 3, 4, 6, 8, 9, 12, 16 \},
$$
and 

\begin{gather*}
(z)_{C_4}=z \left( z-1 \right)  \left( z-2 \right)  \left( z-3 \right)  \left( z-4 \right)  \left( z-6 \right)  \left( z-8 \right)  \left( z-9 \right) 
 \left( z-12 \right)  \left( z-16 \right)\\={z}^{10}-61\,{z}^{9}+1555\,{z}^{8}-21655\,{z}^{7}+180628\,{z}^{6}-929908\,{z}^{5}+
2932320\,{z}^{4}-\\-5411520\,{z}^{3}+5239296\,{z}^{2}-1990656\,z.
\end{gather*}
Thus,  the corresponding Comtet numbers of the first kind are as follows:
\begin{gather*}
 \left [ {C_4 \atop 1}\right ]=-1990656,  \left [ {C_4 \atop 2}\right ]=5239296, \left [ {C_4 \atop 3}\right ]=-5411520,\left [ {C_4 \atop 4}\right ]=2932320,\left [ {C_4 \atop 5}\right ]=-929908,\\ \left [ {C_4 \atop 6}\right ]=180628,\left [ {C_4 \atop 7}\right ]=-21655,\left [ {C_4 \atop 8}\right ]=1555,\left [ {C_4 \atop 9}\right ]=-61,\left [ {C_4 \atop 10}\right ]=1, \left [ {C_4 \atop 0}\right ]=0.
\end{gather*}
}
\end{ex}

Let 

$$
\mathcal{G}(M_2,t)=\sum_{p=0}^\infty M_2^{(p)}(G) t^p,
$$

be the ordinary generating functions of the sequence of the second  general  Zagreb indexes. Let us express the generating functions $\mathcal{G}(M_2,t)$  in terms of the   double star sequences. The following theorem holds.

\begin{te} Let $\mathcal{G}(M_2,t)$   be the  ordinary generating functions of the sequence of the second  general  Zagreb indexes. Then  

\begin{align*}
(i) & \,\mathcal{G}(M_2,t)=
\frac{\displaystyle \sum_{k=0}^{|C_{n-1}|-1} \left( \sum_{i=\max(0, k-|C_{n-1}|)}^{\min(k,|C_{n-1}|)} \left [ {C_{n-1}} \atop {|C_{n-1}|-i}\right ] M_2^{(k-i)}(G) \right) t^k }{\prod\limits_{c \in C_{n-1}}(1- c t)},\\
(ii) & M_2^{(|C_{n-1}|)}(G)=-\sum_{i=1}^{|C_{n-1}|-1} \left [ {C_{n-1}} \atop {i}\right ] M_2^{(i)}(G), 
\end{align*}

 where $\displaystyle \left [ {S \atop i}\right ]$  is the Comtet numbers of the first kind associated with the  set $S.$

\end{te}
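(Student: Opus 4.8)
The plan is to exploit the representation
$$M_2^{(p)}(G)=\sum_{i=0}^{n-2}\sum_{j=i}^{n-2} ((i+1)(j+1))^p f_{i,j}$$
established at the start of this section, which exhibits $M_2^{(p)}(G)$ as a linear combination of the geometric sequences $\{c^p\}_{p\ge 0}$, where $c$ runs over the products $(i+1)(j+1)$. Since each such product lies in $C_{n-1}=\{i\cdot j\mid 0\le i,j\le n-1\}$, I would first regroup the double sum by the value $c=(i+1)(j+1)$ and write
$$M_2^{(p)}(G)=\sum_{c\in C_{n-1}} a_c\, c^p,\qquad a_c=\sum_{(i+1)(j+1)=c} f_{i,j},$$
so the whole sequence $\{M_2^{(p)}(G)\}$ is a superposition of geometric sequences whose ratios are exactly the elements of $C_{n-1}$.

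The key observation is that every $c\in C_{n-1}$ is a root of $(z)_{C_{n-1}}=\prod_{s\in C_{n-1}}(z-s)=\sum_{i=0}^{|C_{n-1}|}\left[{C_{n-1}\atop i}\right]z^i$. Hence each $\{c^p\}$, and by linearity the sequence $\{M_2^{(p)}(G)\}$, satisfies the linear recurrence whose characteristic polynomial is $(z)_{C_{n-1}}$, namely
$$\sum_{i=0}^{|C_{n-1}|}\left[{C_{n-1}\atop i}\right] M_2^{(p+i)}(G)=0\quad\text{for all }p\ge 0.$$
Setting $p=0$ and using that $(z)_{C_{n-1}}$ is monic of degree $|C_{n-1}|$, so $\left[{C_{n-1}\atop |C_{n-1}|}\right]=1$, while $0\in C_{n-1}$ forces $\left[{C_{n-1}\atop 0}\right]=0$, isolates the top term and gives part $(ii)$ at once.

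For part $(i)$ I would pass from the recurrence to the generating function in the standard way. Reversing the Pochhammer polynomial produces the stated denominator,
$$\prod_{c\in C_{n-1}}(1-ct)=t^{|C_{n-1}|}(1/t)_{C_{n-1}}=\sum_{j=0}^{|C_{n-1}|}\left[{C_{n-1}\atop |C_{n-1}|-j}\right]t^j.$$
Multiplying $\mathcal{G}(M_2,t)=\sum_{p}M_2^{(p)}(G)t^p$ by this polynomial and reading off the coefficient of $t^k$ as a Cauchy product, I would note that for $k\ge |C_{n-1}|$ the coefficient reindexes precisely into the left-hand side of the recurrence above and therefore vanishes, while for $0\le k\le |C_{n-1}|-1$ it is the truncated convolution displayed in the numerator. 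This shows $\prod_{c\in C_{n-1}}(1-ct)\,\mathcal{G}(M_2,t)$ is a polynomial of degree at most $|C_{n-1}|-1$ with the claimed coefficients, which is part $(i)$.

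The only genuinely delicate point is the index bookkeeping that matches the Cauchy-product coefficients to the $\max/\min$ limits of the numerator and confirms the vanishing for $k\ge |C_{n-1}|$; there one must carefully track the shift between the recurrence index $i$ and the reversed coefficient $\left[{C_{n-1}\atop |C_{n-1}|-j}\right]$. A secondary subtlety is that $0\in C_{n-1}$: the factor $(1-0\cdot t)=1$ is trivial, so the denominator actually has degree $|C_{n-1}|-1$, consistent with $\left[{C_{n-1}\atop 0}\right]=0$; and since $a_0=0$ (no edge produces a zero product, both endpoints having degree at least one), this root contributes nothing, making its inclusion in the characteristic polynomial harmless.
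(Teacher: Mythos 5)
Your proof is correct, but it takes a genuinely different and more economical route than the paper's. The paper proves $(i)$ first: it starts from the expansion of $M_2^{(p)}(G)$ in the double star sequence with the Stirling numbers of the second kind, substitutes their explicit alternating-sum formula, sums the resulting geometric series term by term, and concludes that the partial fraction decomposition of $\mathcal{G}(M_2,t)$ involves only fractions of the form $1/(1-(rj)t)$ with $0 \leq r,j \leq n-1$; the numerator is then determined by cross-multiplication exactly as you do, and $(ii)$ is extracted afterwards from the vanishing of the coefficient $a_{|C_{n-1}|}$. You instead start from the frequency representation $M_2^{(p)}(G)=\sum_{i,j}((i+1)(j+1))^p f_{i,j}$, which exhibits the sequence at once as a finite superposition of geometric progressions with ratios in $C_{n-1}$; this yields $(ii)$ immediately and independently of $(i)$, since every $c \in C_{n-1}$ is a root of the characteristic polynomial $(z)_{C_{n-1}}$, and $(i)$ then follows by the standard passage from a linear recurrence to a rational generating function. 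Your route bypasses the Stirling-number machinery entirely and is logically cleaner in that the recurrence does not depend on the generating-function computation; your side remarks --- that $a_c=\sum_{(i+1)(j+1)=c}f_{i,j}$, that $a_0=0$ because both endpoints of an edge have positive degree, and that the denominator genuinely has degree $|C_{n-1}|-1$ because $0 \in C_{n-1}$ forces $\left[{C_{n-1} \atop 0}\right]=0$ --- are all correct and in fact clarify points the paper leaves implicit. The one step you only sketch, the index bookkeeping in the Cauchy product, does go through: substituting $i=|C_{n-1}|-j$ converts the coefficient of $t^k$ for $k \geq |C_{n-1}|$ into the recurrence evaluated at $p=k-|C_{n-1}| \geq 0$, hence it vanishes, while for $k \leq |C_{n-1}|-1$ the truncated convolution is exactly the displayed numerator.
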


\begin{proof}

We have
\begin{gather*}
\mathcal{G}(M_2,t)=\sum_{p=0}^\infty M_2^{(p)}(G) t^p=\sum_{p=0}^\infty\left(\sum_{i=1}^{n-2} \sum_{k=i}^{n-2} i! k! \left\{ {p+1 \atop i+1}\right\} \left\{ {p+1 \atop k+1}\right\}S_{i,k}(G)\right) t^p\\=\sum_{i=1}^{n-1} \sum_{k=i}^{n-1} \left(  \sum_{p=0}^\infty  i! k!\left\{ {p+1 \atop i+1}\right\} \left\{ {p +1\atop k+1}\right\} t^p \right) S_{i,k}(G)\\=
\sum_{i=1}^{n-1} \sum_{k=i}^{n-1} \left(  \sum_{p=i}^\infty  i! k!\left\{ {p+1 \atop i+1}\right\} \left\{ {p+1 \atop k+1}\right\} t^p \right) S_{i,k}(G).
\end{gather*}
Since 
$$
{n \brace k}=\frac{1}{k!} \sum_{i=0}^k (-1)^{k-i} \binom{k}{i} i^n,
$$
we get 
\begin{gather*}
\sum_{p=i}^\infty i! k! \left\{ {p +1\atop i+1}\right\} \left\{ {p+1 \atop k+1}\right\} t^p\\=\sum_{p=i}^\infty \left(  \sum_{r=0}^{i+1} \frac{(-1)^{i+1-r}}{i+1} \binom{i+1}{r} r^{p+1} \sum_{j=0}^{k+1} \frac{ (-1)^{k+1-j}}{k+1} \binom{k+1}{j} j^{p+1} \right) t^p\\=
 \frac{1}{(i+1)(k+1)}\left(  \sum_{r=0}^{i+1} (-1)^{i+1-r} r \binom{i+1}{r}  \sum_{j=0}^{k+1} (-1)^{k+1-j} j\,\binom{k+1}{j}  \right)\sum_{p=i}^\infty (r j t)^p\\=
  \frac{1}{(i+1)(k+1)}\sum_{r=0}^{i+1} \sum_{j=0}^{k+1} (-1)^{i+1-r} (-1)^{k+1-j}  rj \binom{i+1}{r}    \binom{k+1}{j}  \frac{(rjt)^i}{1-(rj)t}.
\end{gather*}
It follows that  the partial fraction  decomposition of the generating function $\mathcal{G}(M_2,t)$ is a linear combination of fractions of the form
$$
\frac{1}{1-(rj)t}, 0 \leq r,j \leq n-1.
$$
After simplification we get 
\begin{gather*}
\mathcal{G}(M_2,t)= \frac{a_0+a_1 t+a_2 t^2+\cdots+a_{|C_{n-1}|-1} t^{|C_{n-1}|-1}}{\prod\limits_{c \in C_{n-1}}(1- c t)}
\end{gather*}
for some unknown numbers $a_0, a_1, \ldots, a_{n-1}.$  To define these numbers let us observe that 
\begin{gather*}
\prod\limits_{c \in C_{n-1}}(1- c \,t)=t^{|C_{n-1}|}\prod\limits_{c \in C_{n-1}}\left(\frac{1}{t}- c\right) =t^{|C_{n-1}|}\left(\frac{1}{t} \right)_{C_{n-1}}=\sum_{i=0}^{C_{n-1}}  \left [ {C_{n-1} \atop i}\right ] t^{|C_{n-1}|-i}.
\end{gather*}
Now 
\begin{gather*}
a_0+a_1 t+a_2 t^2+\cdots+a_{|C_{n-1}|-1} t^{|C_{n-1}|-1}=\left(\sum_{i=0}^{|C_{n-1}|}  \left [ {C_{n-1} \atop i}\right ] t^{|C_{n-1}|-i}\right) \left( \sum_{p=0}^\infty M_2^{(p)}(G) t^p \right)\\
=\sum_{p=0}^\infty  \left( \sum_{i=\max(0, p-|C_{n-1}|)}^{\min(p,|C_{n-1}|)} \left [ {C_{n-1}} \atop {|C_{n-1}|-i}\right ] M_2^{(p-i)}(G)  \right) t^p.
\end{gather*}
Equating coefficients of $t^p$  yelds 
$$
a_p= \sum_{i=\max(0, p-|C_{n-1}|)}^{\min(p,|C_{n-1}|)} \left [ {C_{n-1}} \atop {|C_{n-1}|-i}\right ] M_2^{(p-i)}(G).
$$
Therefore
\begin{gather*}
\mathcal{G}(M_2,t)=
\frac{\displaystyle \sum_{p=0}^{|C_{n-1}|-1} \left( \sum_{i=\max(0, p-|C_{n-1}|)}^{\min(p,|C_{n-1}|)} \left [ {C_{n-1}} \atop {|C_{n-1}|-i}\right ] M_2^{(p-i)}(G) \right) t^p }{\prod\limits_{c \in C_{n-1}}(1- c t)}.
\end{gather*}

$(ii)$ Since $a_{p}=0$ for $p \geq |C_{n-1}|$  we have the identity 
$$
a_{|C_{n-1}|}=\sum_{i=\max(0, p-|C_{n-1}|)}^{\min(p,|C_{n-1}|)} \left [ {C_{n-1}} \atop {|C_{n-1}|-i}\right ] M_2^{(k-i)}(G)=0,
$$
or 
$$
\sum_{i=0}^{|C_{n-1}|} \left [ {C_{n-1}} \atop {|C_{n-1}|-i}\right ] M_2^{(|C_{n-1}|-i)}(G)=\sum_{i=0}^{|C_{n-1}|} \left [ {C_{n-1}} \atop {i}\right ] M_2^{(i)}(G)=0
$$
Tacking into account 
$$
\left [ {C_{n-1}} \atop {0}\right ]=0, \left [ {C_{n-1}} \atop {|C_{n-1}|}\right ]=1
$$
we can rewrite the last expression the form
$$
M_2^{(|C_{n-1}|)}(G)=-\sum_{i=1}^{|C_{n-1}|-1} \left [ {C_{n-1}} \atop {i}\right ] M_2^{(i)}(G).
$$

\end{proof}

\begin{ex}{\rm For   $n=4$ we have
$$
C_3=\{ {0, 1, 2, 3, 4, 6, 9}\}.
$$ 
Then 
\begin{gather*}
(z)_{C_3}= z \left( z-1 \right)  \left( z-2 \right)  \left( z-3 \right)  \left( z
-4 \right)  \left( z-6 \right)  \left( z-9 \right) \\={z}^{7}-25\,{z}^{6
}+239\,{z}^{5}-1115\,{z}^{4}+2664\,{z}^{3}-3060\,{z}^{2}+1296\,z.
\end{gather*}
The Comtet numbers of the first kind are as follows: 
\begin{align*}
 &\left [ {C_3 \atop 0}\right ]=0, \left [ {C_3 \atop 1}\right ]=1296,  \left [ {C_3 \atop 2}\right ]=-3060,  \left [ {C_3 \atop 3}\right ]=2664, \\ &\left [ {C_3 \atop 4}\right ]=-1115,   \left [ {C_3 \atop 5}\right ]=239,  \left [ {C_3 \atop 6}\right ]=-25, \left [ {C_3 \atop 7}\right ]= 1.
\end{align*}
Then for a simple graph $G$ with $4$ vertices the following recurrence relation holds
\begin{gather*}
 M_2^{(7)}(G)= -1296\, M_2^{(1)}(G)+3060\,M_2^{(2)}(G)-2664 M_2^{(3)}(G)+1115 M_2^{(4)}(G)-\\-239 M_2^{(5)}(G)+25 M_2^{(6)}(G).
\end{gather*}
}
\end{ex}

\end{document}